\documentclass[12pt]{article}
\usepackage{amsfonts}
\usepackage{eurosym}
\usepackage{amssymb}
\usepackage{amsmath,amssymb,latexsym}
\usepackage[dvips]{graphics}
\setcounter{MaxMatrixCols}{10}
\textwidth=7in \textheight=8.8in
\topmargin -0.2in

\parskip 1pt
\oddsidemargin=-0.2in \evensidemargin=-0.2in
\newtheorem{theorem}{Theorem}[section]

\newtheorem{proof}[theorem]{Proof}
\numberwithin{equation}{section}

\begin{document}

\title{\textbf{The Bakry-\'{E}mery Ricci tensor: Application to mass distribution in space-time}}
\author{{Ghodratallah Fasihi-Ramandi$\sp{a}$\thanks{fasihi@sci.ikiu.ac.ir (Gh. Fasihi Ramandi)}}\\
{\small \textit{Department of Mathematics, Faculty of Science, Imam Khomeini International University, Qazvin, Iran}}}
\date{}
\maketitle
\begin{abstract}
The Bakry-\'{E}mery Ricci tensor gives an analogue of the Ricci tensor for a Riemannian manifold with a smooth function. This notion motivates new version of Einstein field equation in which mass becomes part of geometry. This new field equation is purely geometric and is obtained from an action principle which is formed naturally by scalar curvature associated to the Bakry-\'{E}mery Ricci tensor.
\\
\noindent \textit{Keywords: Bakry-\'{E}mery Ricci, Mass distribution, Space-time, Einstein field equation.} 
\textit{}
\end{abstract}
\section{Introduction}

No well-defined current physical theory claims to model all nature; each intentionally neglects some effects. Roughly, general relativity is a model of nature, especially of gravity, that neglects quantum effects. In fact, general relativity describes gravity in the context of 4-dimensional Lorentzian manifolds by Einstein field equation. Einstein argued that the stress-energy of matter and electromagnetism influences space-time $(M,g)$ and suggested his field equation as follows \cite{sash}.
\begin{equation}\label{einstein}
\mathrm{Ric}-\dfrac{1}{2}Rg=T+E,
\end{equation}
where, $T$ and $E$ are stress-energy tensor of matter and electromagnetic field, respectively. The expressions in the right hand side of the Einstein equation \ref{einstein} are physical concepts whilst  the left hand side is completely geometric. In fact, influence of matter and electromagnetism on space-time is not described directly by the geometry of space-time manifold $(M,g)$. Hence, a question naturally arises here, is:  how to geometrize matter and electromagnetism?\\

 Attempts at geometrization of electromagnetism also known as unification of gravity and electromagnetism have been made ever since the advent of general theory of relativity. Most of these attempts share the idea that Einstein's original theory must in some way be generalized that some part of geometry describes electromagnetism. Nevertheless, there are relatively few results on geometrization of mass in general relativity or, more generally applicable to space-time manifold, and many of them rely on dimension 5 or other geometric structures like Lie algebroids (see \cite{Torsten}, \cite{Amagh} and \cite{mass}).\\
 
In the following, we present some clues which show that replacing the Ricci curvature tensor of a space-time manifold $(M,g)$ by the Bakry-\'{E}mery Ricci tensor provides us a good apparatus for geometrization of mass.\\

(1) Because of the Einstein equation $\mathrm{Ric}-\dfrac{1}{2}Rg=T+E$, any attempt on geometrization of mass and electromagnetism then becomes a condition on the Einstein equation and so, ultimately, on the Ricci curvature. Once one has a condition on Ricci curvature, one has a tool to geometrization of mass.\\
Suppose that $f$ is a smooth function on a Riemannian manifold $(M,g)$, then the $m$-Bakry-\'{E}mery Ricci tensor is defined as
\[\mathrm{Ric}_f^m =\mathrm{Ric}+\mathrm{Hess} f-\dfrac{1}{m}df\otimes df,\]
where, $\mathrm{Ric}$ stands for Ricci curvature tensor of $(M,g)$ and $m$ is a non-zero constant that is also allowed to be infinite, in which case we write $\mathrm{Ric}_f^\infty =\mathrm{Ric}+\mathrm{Hess} f$. The Ricci curvature tensor can describes gravity and the $m$-Bakry-\'{E}mery Ricci tensor is capable of describing gravity and matter, simultaneously. In fact, the Ricci curvature describes gravity and $f$ can play the role of potential for mass distribution in space-time.\\

(2) From the point of view of the foundation of general relativity, projective structures play an important role; the reason relies on the relations 'free falling (massive) particles-pre-geodesics-projective structures' (see \cite{ehl}).\\
Let $\nabla$ be the Levi-Civita connection of a Riemannian manifold $(M,g)$ of dimension $n$. For a 1-form $\alpha$ define 
\[
\nabla_X^\alpha Y= \nabla_X Y-\alpha (X)Y-\alpha(Y)X.\]
The affine connection $\nabla^\alpha$ is torsion-free, moreover it is projectively equivalent to $\nabla$, meaning that $\nabla^\alpha$ has the same geodesics, up to re-parametrization. It can be seen that any torsion-free connection projectively equivalent to $\nabla$ is of the form $\nabla^\alpha$ for some $\alpha$.\\
To see the link to Bakry-\'{E}mery Ricci curvature, let $\alpha=\dfrac{df}{n-1}$, then we recover a Bakry-\'{E}mery Ricci Ricci tensor. As a simple calculation shows that (Proposition 3.3 in \cite{WYLIE}) 
\[ \mathrm{Ric}^{\nabla^\alpha} =\mathrm{Ric}_g +\mathrm{Hess}f +\dfrac{df\otimes df}{n-1}=\mathrm{Ric}_f^{1-n}.
\]
In other words, the Ricci tensor of a projectively equivalent connection is exactly the $(1-n)$-Bakry-\'{E}mery Ricci tensor. The Ricci tensor of a projectively equivalent connection can be seen as a condition on Ricci curvature and so can be a tool for geimetrization of mass.\\

(3) We already know Einstein metrics play a crucial role in general theory of relativity. Hence, the key is provided by generalization of Einstein metrics should probably capable of solve our concern.\\
A triple $(M, g, f)$ (a Riemannian manifold $(M, g)$ with a smooth function $f$ on $M$) is said to be $(m-)$quasi-Einstein if it satisfies the equation
\[\mathrm{Ric}_f^m =\mathrm{Ric}+\mathrm{Hess} f-\dfrac{1}{m}df\otimes df=\lambda g,\]
for some $\lambda\in \mathbb{R}$. The above equation is especially interesting in that when $m=\infty$ it is exactly the gradient Ricci soliton equation; when $m$ is a positive integer, it corresponds to warped product Einstein metrics (see \cite{JEFFREY}); when $f$
is constant, it gives the Einstein equation. We call a quasi-Einstein metric trivial when f is constant. Following the terminology of Ricci solitons, a quasi-Einstein metric on a manifold $M$ will be called expanding, steady or shrinking, respectively, if $\lambda <0$, $\lambda =0$ or $\lambda >0$.\\
As we have noticed, the Bakry-\'{E}mery Ricci tensor is related to notion of quasi Einstein metrics. So, we hope Bakry-\'{E}mery Ricci tensors provide a good apparatus for geometrization of mass in the next section.
\section{New Field Equation}
In this section, we consider $(M,g)$ as a space-time manifold whose dimension is $n$ and $2\leq n$. Each smooth function $f$ on $M$ determines a 1-Bakry-\'{E}mery Ricci tensor on $M$. \\
Let $\mathcal{M}$ denotes the set of all 1-Bakry-\'{E}mery Ricci tensor on a manifold $M$ then, $\mathcal{M}$ can be written as the set of some pairs $(g, f)$, where $g$ is a semi-Riemannian metric and, $f$ is a smooth function on $M$. In fact, $\mathcal{M}$ can be consider as an open subset of an infinite dimensional vector space.\\
Denote canonical volume form of a meter $g$ on the oriented manifold $M$ by $dV_g$. It is well-known that critical points of Einstein-Hilbert functional 
\[\mathcal{L}(g)=\int_M R_g dV_g,\]
are metrics which satisfy the following equation,
\[\mathrm{Ric}-\dfrac{1}{2}Rg=0.\] 
The above equation is Einstein field equation in vacuum. It motivates to define a new Hilbert-Einstein functional $\mathcal{L}:\mathcal{M}\to \mathbb{R}$
\[ \mathcal{L}(g,f)=\int_M R_{(g,f)}dV_g,
\]
where, $R_{(g,f)}=\mathrm{tr}(\mathrm{Ric}_f^{1})$ is called scalar curvature of $\mathrm{Ric}_f^{1}$. Equations in which the critical points of this functional satisfy will be called the {\it new field equation}.\\
For a symmetric 2-covariant tensor $s$ on $M$, and a smooth function $h$ set,
\begin{align*}
\tilde{g}(t)&=g+ts\\
\tilde{f}(t)&=f+th
\end{align*}
For sufficiently small $t$, $(\tilde{g}(t) ,\tilde{f}(t))$ is a 1-Bakry-\'{E}mery Ricci tensor on M and is a variation of $(g, f)$. The Bakry-\'{E}mery Ricci tensor $(g, f)$ is a critical $1$-Bakry-\'{E}mery Ricci tensor for Hilbert action if and only if for any pair $(s,h)$:
\begin{equation}\label{eq}
\dfrac{d}{dt}\vert_{t=0} \mathcal{L}\big(\tilde{g}(t),\tilde{f}(t)\big)=
\dfrac{d}{dt}\vert_{t=0} \int_M R_{(\tilde{g}(t) ,\tilde{f}(t))} dV_{g+ts} =0,
\end{equation}
where, $R_{(\tilde{g}(t) ,\tilde{f}(t))}$ is the scalar curvature of $(\tilde{g}(t) ,\tilde{f}(t))$ and
\begin{equation*}
R_{(\tilde{g}(t) ,\tilde{f}(t))}=R_{\tilde{g}(t)}+\triangle_{\tilde{g}(t)} (\tilde{f}(t))-{|\vec{\nabla}\tilde{f}(t)|^2},
\end{equation*}
where, by notation $\vec{\nabla}$ we mean the gradient of smooth functions.\\
To find derivation in \ref{eq}, we must compute derivations of $R_{\tilde{g}(t)}$, $\triangle_{\tilde{g}(t)} (\tilde{f}(t))$, $|\vec{\nabla}\tilde{f}(t)|^2$ and
$dV_{g+ts}$ for $t = 0$. In \cite{bleeker}, it is shown
\begin{align}
(R_{\tilde{g}(t)})'(0)=&-\langle s,\mathrm{Ric}\rangle +\mathrm{div}(X),\qquad X\in \mathcal{X}(M),\\
(dV_{g+ts})'(0)=&\dfrac{1}{2}\langle g,s\rangle dV_g.
\end{align}
By local computations, we find derivation of $\triangle_{\tilde{g}(t)} (\tilde{f}(t))$ and $|\vec{\nabla}\tilde{f}(t)|^2$ at $t=0$. By means of a local coordinate system on $M$ with local frame $\partial_i$, we can write
\begin{align*}
\dfrac{d}{dt}\vert_{t=0} |\vec{\nabla}\tilde{f}(t)|^2 &=\big( \tilde{g}^{ij}(t) \dfrac{\partial \tilde{f}(t)}{\partial x^i} \dfrac{\partial \tilde{f}(t)}{\partial x^j}\big)' (0)=-s^{ij}\dfrac{\partial f}{\partial x^i}\dfrac{\partial f}{\partial x^j}+2g^{ij}\dfrac{\partial h}{\partial x^i}\dfrac{\partial f}{\partial x^j}\\
&=-\langle s, df\otimes df \rangle +2 \langle \vec{\nabla}h, \vec{\nabla}f \rangle .
\end{align*}
Local computation of Laplacian of a smooth function $f$ is as follows.
\[\triangle (f)= g^{ij} (\dfrac{\partial^2 f}{\partial x^i \partial x^j}-\Gamma_{ij}^k \dfrac{\partial f}{\partial x^k}).
\]
So,
\begin{align}
[\triangle_{\tilde{g}(t)} (\tilde{f}(t))]'(0) &= \Big( \tilde{g}^{ij}(t) (\dfrac{\partial^2 (f+th)}{\partial x^i \partial x^j}-\tilde{\Gamma}_{ij}^k (t)\dfrac{\partial (f+th)}{\partial x^k})\Big)'(0)\nonumber \\
&=-s^{ij} (\dfrac{\partial^2 f}{\partial x^i \partial x^j}-\Gamma_{ij}^k \dfrac{\partial f}{\partial x^k})+g^{ij}(\dfrac{\partial^2 h}{\partial x^i \partial x^j}-A_{ij}^k \dfrac{\partial f}{\partial x^k}-\Gamma_{ij}^k \dfrac{\partial h}{\partial x^k})\nonumber \\
&=-\langle s ,\mathrm{Hess}(f)\rangle +\triangle (f) -g^{ij}A_{ij}^k \dfrac{\partial f}{\partial x^k},
\end{align}
where, $A_{ij}^k =(\tilde{\Gamma}_{ij}^k)'(0)$. Define a vector field $Y$ by components $Y^k =g^{ij}A_{ij}^k$. We can write
\begin{align*}
Y^k &=g^{ij}A_{ij}^k =\dfrac{1}{2} g^{ij}g^{kl}(\nabla_i s_{jl}+\nabla_j s_{il}-\nabla_l s_{ij})\\
&=\dfrac{1}{2} g^{kl}(g^{ij}\nabla_i s_{jl}+g^{ij}\nabla_j s_{il}-g^{ij}\nabla_l s_{ij})\\
&=\dfrac{1}{2} g^{kl}(\mathrm{div}(s)_l +\mathrm{div}(s)_l -\nabla_l \mathrm{tr}(s))=\mathrm{div}(s)^k -\dfrac{1}{2}(\vec{\nabla} \mathrm{tr}(s))^k .
\end{align*}
Consequently,
\[ g^{ij}A_{ij}^k \dfrac{\partial f}{\partial x^k}= \mathrm{div}(s)^k \dfrac{\partial f}{\partial x^k} -\dfrac{1}{2}(\vec{\nabla} \mathrm{tr}(s))^k \dfrac{\partial f}{\partial x^k}=\mathrm{div}(s) (\vec{\nabla}f)-\dfrac{1}{2}\langle \vec{\nabla} \mathrm{tr}(s), \vec{\nabla}f \rangle.
\]
In the following, whenever it is convenient, we consider $s$ as a $(1,1)$ symmetric tensor. One can easily see
\[g^{ij}A_{ij}^k \dfrac{\partial f}{\partial x^k}=\mathrm{div}(s(\vec{\nabla}f))-\langle s,\mathrm{Hess}f\rangle-\dfrac{1}{2}\langle \vec{\nabla} \mathrm{tr}(s), \vec{\nabla}f \rangle .
\]
By above computations, we have
\begin{align}
[\triangle_{\tilde{g}(t)} (\tilde{f}(t))]'(0) &= \triangle (f)-\mathrm{div}(s(\vec{\nabla}f))+\dfrac{1}{2}\langle \vec{\nabla} \mathrm{tr}(s), \vec{\nabla}f \rangle .
\end{align}
Remind that the integral of divergence of every vector fields on $M$ is zero, consequently
integral of Laplacian of any smooth function on $M$ is zero. Also, for any two smooth
function $f$ and $h$ we have \cite{topics}
\[ \int_M \langle \vec{\nabla}h ,\vec{\nabla} f \rangle dV_g = -\int_M \triangle (f)h dV_g.
\]
Now, we are prepare to find critical $1$-Bakry-\'{E}mery Ricci tensor for Hilbert action.
\begin{align*}
0=\dfrac{d}{dt}\vert_{t=0} \mathcal{L}\big(\tilde{g}(t),\tilde{f}(t)\big)=&\dfrac{d}{dt}\vert_{t=0} \int_M R_{(\tilde{g}(t) ,\tilde{f}(t))} dV_{g+ts} \\
=&\int_M \big(R+\triangle(f)-{|\vec{\nabla}f|^2}\big)\dfrac{1}{2}\langle s,g \rangle dV_g\\
&+\int_M \big( -\langle s,\mathrm{Ric}\rangle +\mathrm{div}(X)+ \triangle (f)-\mathrm{div}(s(\vec{\nabla}f))\\
&+\dfrac{1}{2}\langle \vec{\nabla} \mathrm{tr}(s), \vec{\nabla}f \rangle +\langle s, {df\otimes df} \rangle -{2} \langle \vec{\nabla}h, \vec{\nabla}f \rangle\big)dV_g\\
=&\int_M \langle -\mathrm{Ric}+\dfrac{1}{2}Rg +{df\otimes df} -\dfrac{|\vec{\nabla}f|^2}{2}g,s \rangle dV_g \\
&+\int_M {2\triangle(f)}h dV_g
\end{align*}
The above expressions vanish for all pair $(g,f)$ if and only if
\begin{align}
\mathrm{Ric}-\dfrac{1}{2}Rg &= \big( df\otimes df-\dfrac{|\vec{\nabla}f|^2}{2}g \big),\label{field1}\\
\triangle(f)&=0. \label{field2}
\end{align}
In the case which $\mathrm{dim}M=4$, taking trace from both side of \ref{field1} yields $R={|\vec{\nabla}f|^2}$. The scalar curvature $R$ is related to matter distribution in points of the space-time manifold. Hence, $f$ is related to matter distribution. In fact, equation \ref{field2} express that $\mathrm{div}(\vec{\nabla}f)=0$ and, $\vec{\nabla}f$ can be interpreted as current of mass which satisfies conservation law. Also, we can interpret the expression appeared in the right hand side of \ref{field1} as stress-energy tensor of matter. We need to show that the divergence of 
\[ T^f :=df\otimes df-\dfrac{|\vec{\nabla}f|^2}{2}g,\] 
which retrieves conservation law of stress-energy tensor.
\begin{theorem}
Suppose that $f$ is a smooth function on a Riemannian manifold $(M,g)$ such that $\triangle (f)=0$, then the symmetric tensor $T^f$ is divergence-free.
\end{theorem}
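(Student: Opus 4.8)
The plan is to compute $\mathrm{div}(T^f)$ directly in a local coordinate frame $\partial_i$, using the Levi-Civita connection $\nabla$, and to show that the result equals $(\triangle f)\,df$, which vanishes under the hypothesis. Writing $f_i=\partial_i f$ and raising indices with $g$, the components of $T^f$ are $T^f_{ij}=f_i f_j-\tfrac{1}{2}|\vec{\nabla}f|^2 g_{ij}$, and the $j$-th component of the divergence is $(\mathrm{div}\,T^f)_j=g^{ik}\nabla_k T^f_{ij}=\nabla^i T^f_{ij}$. I would keep the harmonicity assumption unused until the very end, so as to exhibit the cleaner general identity $\mathrm{div}(T^f)=(\triangle f)\,df$ of which the theorem is the special case.

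First I would apply the Leibniz rule to the term coming from $df\otimes df$, splitting $\nabla^i(f_i f_j)=(\nabla^i f_i)f_j+f^i\nabla_i f_j$. Here the first factor is precisely $\nabla^i f_i=g^{ik}\nabla_k\nabla_i f=\triangle f$, so this piece contributes $(\triangle f)f_j$. The remaining contribution from the trace part is $-\tfrac{1}{2}\nabla^i(|\vec{\nabla}f|^2)g_{ij}=-\tfrac{1}{2}\nabla_j(|\vec{\nabla}f|^2)$, where I have used $g^{ik}g_{ij}=\delta^k_j$.

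Using metric compatibility $\nabla g=0$, I would expand $\nabla_j(|\vec{\nabla}f|^2)=\nabla_j(g^{kl}f_k f_l)=2f^k\nabla_j f_k$. The crucial point, and the only place where something must be \emph{verified} rather than merely bookkept, is that $\nabla_j f_k=\mathrm{Hess}(f)_{jk}$ is symmetric in $j$ and $k$; this holds because $\nabla$ is torsion-free. That symmetry lets me rewrite $f^k\nabla_j f_k=f^k\nabla_k f_j=f^i\nabla_i f_j$, so that $\tfrac{1}{2}\nabla_j(|\vec{\nabla}f|^2)=f^i\nabla_i f_j$ cancels exactly against the $df\otimes df$ contribution. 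Collecting the three terms then gives $(\mathrm{div}\,T^f)_j=(\triangle f)f_j+f^i\nabla_i f_j-f^i\nabla_i f_j=(\triangle f)f_j$.

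Invoking the hypothesis $\triangle f=0$ closes the argument, since the surviving term is $(\triangle f)\,df=0$. I do not anticipate a genuine obstacle here: the computation is short, and its entire content is the interplay between the symmetry of the Hessian (responsible for the cancellation of the two first-order-in-Hessian terms) and the harmonicity assumption (responsible for annihilating the leftover $\triangle f$ factor). The only care needed is consistent index raising and lowering and the recognition of $\langle\vec{\nabla}f,\vec{\nabla}(\cdot)\rangle$ as a contraction against the Hessian.
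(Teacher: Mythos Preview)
Your proof is correct and follows essentially the same approach as the paper's: both compute the divergence directly, split $T^f$ into the $df\otimes df$ piece and the trace piece, and use the symmetry of the Hessian to cancel the first-order terms, leaving only $(\triangle f)\,df$. The only difference is cosmetic---the paper works with an orthonormal frame and an arbitrary vector $X$, while you use coordinate indices---but the logical content is identical.
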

\begin{proof}
Let $\{ e_i \}_{i=1}^n$ is an orthonormal base on $M$ and denote its reciprocal base by $\{ e^i\}_{i=1}^n$. So, we can write
\begin{align*}
\mathrm{div}(df\otimes df)(X)&=\sum_{i=1}^n (\nabla_{e_i} df\otimes df)(e^i ,X)\\
&=\sum_{i=1}^n \big( (\nabla_{e_i} df)\otimes df +df\otimes (\nabla_{e_i} df) \big) (e^i ,X)\\
&=\sum_{i=1}^n \big( (\nabla_{e_i} df)(e^i) df(X)+df(e^i)(\nabla_{e_i} df)(X)\\
&=\triangle(f)df(X) +\mathrm{Hess}f(\vec{\nabla}f ,X)=\mathrm{Hess}f(\vec{\nabla}f ,X).
\end{align*}
Also,
\begin{align*}
\mathrm{div}(|\vec{\nabla}f|^2g)(X)&=d(|\vec{\nabla}f|^2)(X)= X\langle \vec{\nabla}f, \vec{\nabla}f\rangle =2\langle \nabla_X (\vec{\nabla}f), \vec{\nabla}f\rangle \\
&=2(\nabla_X df)(\vec{\nabla}f)=2\mathrm{Hess}f(\vec{\nabla}f ,X).
\end{align*}
The above computations show that $\mathrm{div}(T^f)=0$.
\end{proof}
In the case which $\mathrm{dim}M\geq 3$, a simpler form of equation \ref{field1} is obtained.
\begin{theorem}
Suppose that $\mathrm{dim}M =n\geq 3$, then the critical Bakry-\'{Em}ery tensors of Hilbert action satisfy the following equations.
\begin{align}
\mathrm{Ric}&={df\otimes df}, \label{fild1}\\
\triangle (f)&=0.\label{fild2}
\end{align}
\end{theorem}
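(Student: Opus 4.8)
The plan is to derive \ref{fild1} and \ref{fild2} as straightforward algebraic consequences of the critical-point equations \ref{field1} and \ref{field2} that were already obtained from the variational computation. Equation \ref{field2}, namely $\triangle(f)=0$, is literally the same as the desired \ref{fild2}, so nothing is required there; the entire content lies in reducing \ref{field1} to the clean form $\mathrm{Ric}=df\otimes df$. This generalizes the dimension-four trace observation made just before the first theorem, where tracing \ref{field1} was noted to give $R=|\vec{\nabla}f|^2$.

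First I would take the metric trace of \ref{field1}. Contracting both sides with $g^{ij}$ and using the three identities $\mathrm{tr}(\mathrm{Ric})=R$, $\mathrm{tr}(g)=n$, and $\mathrm{tr}(df\otimes df)=g^{ij}\partial_i f\,\partial_j f=|\vec{\nabla}f|^2$, equation \ref{field1} becomes
\[
R-\dfrac{n}{2}R=|\vec{\nabla}f|^2-\dfrac{n}{2}|\vec{\nabla}f|^2,
\]
which rearranges to
\[
\dfrac{2-n}{2}\,R=\dfrac{2-n}{2}\,|\vec{\nabla}f|^2.
\]
The decisive step is now where the dimension hypothesis is used: since $n\geq 3$ the coefficient $2-n$ is nonzero, so it may be cancelled to conclude $R=|\vec{\nabla}f|^2$. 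This is precisely the point that forces the assumption $n\geq 3$, because at $n=2$ the factor $2-n$ vanishes and the trace identity is vacuous, so the reduction genuinely breaks down in dimension two.

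Finally I would substitute $|\vec{\nabla}f|^2=R$ back into the right-hand side of \ref{field1}, obtaining $\mathrm{Ric}-\dfrac{1}{2}Rg=df\otimes df-\dfrac{R}{2}g$; adding $\dfrac{1}{2}Rg$ to both sides cancels the two scalar terms and leaves exactly $\mathrm{Ric}=df\otimes df$, which is \ref{fild1}. I do not anticipate a serious obstacle here: the only non-mechanical move is recognizing that tracing is the correct operation and carefully tracking the dimensional factor $\tfrac{2-n}{2}$, after which the argument is elementary once the trace of $df\otimes df$ is identified with $|\vec{\nabla}f|^2$.
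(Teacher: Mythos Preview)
Your argument is correct and follows essentially the same route as the paper: trace \ref{field1}, use $n\geq 3$ to cancel the factor $\tfrac{2-n}{2}$ and obtain $R=|\vec{\nabla}f|^2$, then substitute back to reduce \ref{field1} to \ref{fild1}. The only addition in the paper is that it also records the easy converse (from \ref{fild1} one recovers \ref{field1} by the same trace computation), establishing equivalence rather than just the one-way implication, but this is not required by the theorem as stated.
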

\begin{proof}
The second equation is the same as \ref{field2}. Let us assume that \ref{field1} holds, then by taking trace on both sides of this equation, we have
\[R-\dfrac{n}{2}R=(|\vec{\nabla}f|^2-\dfrac{n}{2}|\vec{\nabla}f|^2)\Rightarrow R={|\vec{\nabla}f|^2}.\]
By replacing the above formula in \ref{field1}, we derive \ref{fild1}. Now, let \ref{fild1} holds. By computation traces
of two sides of equation \ref{fild1}, we find 
\[R={|\vec{\nabla}f|^2}.\]
 Hence, by addition suitable expression to each side of equation \ref{fild1} we obtain \ref{field1}.
\end{proof}
Our obtained field equations are purely geometric when $(g,f,\lambda )$ be a 1-quasi Einstein metric for some $\lambda\in\mathbb{R}$. 
\begin{theorem}
Suppose that $(g,f)$ is a critic point of Hilbert-Einstein functional then, $(g,f,\lambda)$ is a 1-quasi Einstein metric if and only if $\lambda=0$ (the quasi-Einstein metric is steady) and $\vec{\nabla}f$ is a parallel vector field.
\end{theorem}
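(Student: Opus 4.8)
The plan is to reduce the $1$-quasi-Einstein equation to a single Hessian condition by feeding in the two field equations that hold at a critical point, and then to extract the value of $\lambda$ by taking a trace. Since $(g,f)$ is a critical point of the Hilbert--Einstein functional, the previous theorem (equations \ref{fild1} and \ref{fild2}) tells us that $\mathrm{Ric}=df\otimes df$ and $\triangle(f)=0$. I would substitute the first of these directly into the $1$-quasi-Einstein equation
\[
\mathrm{Ric}+\mathrm{Hess}f-df\otimes df=\lambda g,
\]
so that the two $df\otimes df$ terms cancel and the hypothesis collapses to the equivalent statement $\mathrm{Hess}f=\lambda g$. This isolates the entire content of the quasi-Einstein condition in one tensor identity, and the problem becomes purely a question about that identity.

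For the forward implication, assuming $\mathrm{Hess}f=\lambda g$, I would take the $g$-trace of both sides. Since $\mathrm{tr}_g(\mathrm{Hess}f)=\triangle(f)$ and $\mathrm{tr}_g(g)=n$, this yields $\triangle(f)=n\lambda$. Invoking the second field equation $\triangle(f)=0$ together with $n\geq 3>0$ forces $\lambda=0$, and hence $\mathrm{Hess}f=0$. The final step is the standard identity $\mathrm{Hess}f(X,Y)=\langle\nabla_X\vec{\nabla}f,Y\rangle$, which shows that $\mathrm{Hess}f=0$ is equivalent to $\nabla_X\vec{\nabla}f=0$ for every $X$, i.e. to $\vec{\nabla}f$ being parallel.

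The converse is then immediate: if $\lambda=0$ and $\vec{\nabla}f$ is parallel, the same identity gives $\mathrm{Hess}f=0=\lambda g$, and combining this with $\mathrm{Ric}=df\otimes df$ restores $\mathrm{Ric}+\mathrm{Hess}f-df\otimes df=0=\lambda g$, so $(g,f,0)$ is indeed a $1$-quasi-Einstein triple.

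I do not expect a serious obstacle; the computation is short once both field equations are used simultaneously. The point most easily overlooked is that harmonicity $\triangle(f)=0$ is genuinely needed: without it one would only obtain $\mathrm{Hess}f=\lambda g$ with an arbitrary $\lambda$, so it is precisely the second field equation that pins down $\lambda=0$ and upgrades the conclusion from the concircular condition $\mathrm{Hess}f=\lambda g$ to the parallel condition $\mathrm{Hess}f=0$. One should also keep track of the dimension hypothesis $n\geq 3$, since the simplified equations \ref{fild1}--\ref{fild2} were derived under it.
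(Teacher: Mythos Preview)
Your proof is correct and follows essentially the same approach as the paper: substitute $\mathrm{Ric}=df\otimes df$ into the $1$-quasi-Einstein equation to obtain $\mathrm{Hess}f=\lambda g$, trace and use $\triangle(f)=0$ to force $\lambda=0$, and then identify $\mathrm{Hess}f=0$ with $\vec{\nabla}f$ being parallel. Your treatment of the converse is in fact a bit cleaner than the paper's, since you explicitly invoke the standing critical-point hypothesis $\mathrm{Ric}=df\otimes df$ to rebuild the quasi-Einstein equation from $\mathrm{Hess}f=0$.
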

\begin{proof}
Assume $(g,f,\lambda)$ is 1-quasi Einstein metric on $M$, then
\[\mathrm{Ric}_f^1 =\mathrm{Ric}+\mathrm{Hess} f-df\otimes df=\lambda g.\] 
Hence, the equation \ref{fild1} implies that
\[\mathrm{Hess}f=\lambda g.\]
By taking traces on both sides of the above formula and using \ref{fild2}, we get
\[0=\lambda n,\]
so, $\lambda=0$ which means the quasi Einstein metric is steady and we have
\[\mathrm{Hess}f=0,\]
which implies that $\vec{\nabla}f$ is a parallel vector field on $M$. \\
Invesely, by definition of steady 1-quasi Einstein metric $(g,f)$ we can write
\[\mathrm{Ric}+\mathrm{Hess} f-df\otimes df=0.\] 
Since, $\vec{\nabla}f$ is a parallel vector field, we have $\mathrm{Hess}f=0$ which implies equations \ref{fild1} and \ref{fild2}.
\end{proof}
\textbf{Remark:} Denote by $\theta$ the 1-form $df$ appeared in above theorem. In the Riemannian manifold $(M,g)$
\[\mathrm{Hess}f=\nabla df=0,\]
means $\theta$ is a parallel 1-form. Moreover, in the generic case (where $\mathrm{Ric}$ is non degenerate) by the Weitzenb\"{o}ck formula we have $\theta =0$. Hence, in this case $f$ is locally constant and because of connectedness of $M$ it will be globally constant. So, generic Riemannian metrics are an obstruction to geometrize matter in general relativity.
\section{An example of space-time-mass manifold}
In this section we give an example of a manifold which satisfies the equations \ref{fild1} and \ref{fild2}. This example is an Einstein-de Sitter model in general relativity.\\
Suppose that $(N,\bar{g})$ is a Riemannian manifold of dimension $n\geq 2$ and, set $M = N \times (0,\infty)$. Any tangent vector to $M$ at a point $(p, t)$ is of the form $(v, \lambda)$ in which $v\in T_p N$ and $\lambda\in \mathbb{R}$ is a scalar. The vector field $(0, 1)$ on $M$ is denoted by $\partial_t$. Considering this vector field as a derivation of $C^\infty (M)$, for a smooth function $h(p, t)$ on $M$ we have $\partial_t (h)={\partial h / \partial t}$. Denote vector fields on $N$ by $X, Y , Z, \cdots$ and consider them as special vector fields on $M$. Also, denote the second projection map $(p, t)\mapsto t$ on $M$ by $t$. We can interpret $t$ as time. Note that for the 1-form $dt$ we have $dt(v, \lambda) = \lambda$, so $dt(\partial_t) = 1$.\\
For some smooth function $a:(0,\infty)\to \mathbb{R}$ define a metric $g$ on $M$ as follows.
\[g=e^{2a(t)}\bar{g}-dt\otimes dt.\]
Consider $f:N\times (0,\infty)\to \mathbb{R}$ which depends only on $t$ and denote it by $f(t)$. Consequently, $df=f'(t)dt$, $\vec{\nabla}f=-f'(t)\partial_t$ and $|\vec{\nabla}f|^2=-|f'(t)^2|$.\\
Let $\overline{\nabla}$ and $\nabla$ denote the Levi-Civita connections of $N$ and $M$, respectively. Routine computations show that
\begin{align*}
\nabla_X,Y &=\overline{\nabla}_X,Y +a'(t)g(X,Y)\partial_t , \\
\nabla_X,\partial_t &= a'(t)Y,\\
\nabla_{\partial_t} \partial_t &=0.
\end{align*}
Also, an easy local computation indicates that
\begin{equation}
\triangle(f) =-na'(t)f'(t)-f''(t).
\end{equation}
Hence, equation $\triangle(f) =0$ implies that $|f'(t)|=ce^{-na(t)}$, for some constant $c$.\\
Now, let $\overline{\mathrm{Ric}}$ and $\mathrm{Ric}$ denote Ricci curvature tensors of $N$ and $M$, respectively. Straightforward computations show that
\begin{align*}
\mathrm{Ric}(X,Y)&=\overline{\mathrm{Ric}}(X,Y)+(a''(t)+na'(t))g(X,Y), \\
\mathrm{Ric}(X,\partial_t)&=0,\\
\mathrm{Ric}(\partial_t ,\partial_t)&=-n(a''(t)+a'(t)).
\end{align*}
Equation \ref{fild1} for this example, becomes $\mathrm{Ric}=|f'(t)|^2 {dt\otimes dt}$. Hence, this equation holds if and only if
\begin{align}
\overline{\mathrm{Ric}}(X,Y)&=-(a''(t)+na'(t))g(X,Y),\label{ri}\\
n(a''(t)+a'(t))&={|f'(t)|^2}={c^2 e^{-2na(t)}}\label{ri1}
\end{align}
Left side of \ref{ri} does not depend on $t$, so $a''(t)+na'(t)$ have to be constant and $N$ is an Einstein manifold. In the case $a''(t)+na'(t)=0$ we find solution $a(t)={1/n}.\ln t$ and for this solution, equation \ref{ri1} also holds for $c=\sqrt{(n-1)/n}$.\\
So, for an $n$-dimensional Ricci flat manifold $(N,\bar{g})$ the meter $g=t^{{2}/{n}}\bar{g}-dt\otimes dt$ on
$M$ and the function $f(t)=\sqrt{(n-1)/n} \ln t$ satisfies field equations \ref{fild1} and \ref{fild2}. In this model, as $t$ approaches to zero, universe become smaller and density of matter increases to infinity. Time $t = 0$ is not in $M$ and this time is the instant of Big-Bang.

\end{document}